\newtheorem{theorem}{Theorem}
\DeclareMathOperator{\hr}{\mathbb H^2\times \mathbb R}
\DeclareMathOperator{\rr}{\mathbb R}
\newtheorem{proposition}{Proposition}
\newtheorem{remark}{Remark}
\theoremstyle{definition}\newtheorem{definition}{Definition}
\numberwithin{equation}{section}
\newtheorem*{claim}{Claim}
\title{A half-space theorem for ideal Scherk graphs in $M\times\rr$}
\author{Ana Menezes}
\date{}
\begin{document}
\maketitle
\begin{abstract}
We prove a half-space theorem for an ideal Scherk graph $\Sigma\subset M\times\rr$ over a polygonal domain $D\subset M,$ where $M$ is a Hadamard surface whose curvature is bounded above by a negative constant. More precisely, we show that a properly immersed minimal surface contained in $D\times\rr$ and disjoint from $\Sigma$ is a translate of $\Sigma.$
\end{abstract}

\section{Introduction}
A well known result in the global theory for proper minimal surfaces in the Euclidean $3$-space is the so called \textit{half-space theorem} due to Hoffman and Meeks \cite{HM}, which says that if a properly immersed minimal surface $S$ in $\rr^3$ lies on one side of some plane $P,$ then $S$ is a plane parallel to $P.$ Moreover, they also proved the \textit{strong half-space theorem}: two properly immersed minimal surfaces in $\rr^3$ that do not intersect must be parallel planes.

The problem of giving conditions which force two minimal surfaces of a Riemannian manifold to intersect has received considerable attention, and many people have worked on this subject.

Notice that there is no half-space theorem in Euclidean spaces of dimensions greater than $4$ since there exist rotational proper minimal hypersurfaces contained in a slab. 

Similarly, there exists no half-space theorem for horizontal slices in $\hr$ since rotational minimal surfaces (catenoids) are contained in a slab \cite{NR1, NR2}. However, there are half-space theorems for constant mean curvature (CMC) $1/2$ surfaces in $\hr$ \cite{HRS, NS}. For instance, Hauswirth, Rosenberg, and Spruck \cite{HRS} proved that if $S$ is a properly immersed CMC $1/2$ surface in $\hr,$ contained on the mean convex side of a horocylinder $C,$ then $S$ is a horocylinder parallel to $C;$ and if $S$ is embedded and contains a horocylinder $C$ on its mean convex side, then $S$ is also a horocylinder parallel to $C$. Nelli and Sa Earp \cite{NS} showed that in $\hr$ the mean convex side of a simply connected rotational CMC $1/2$ surface can not contain a complete CMC $1/2$ surface besides the rotational simply connected ones.

Other examples of homogeneous manifolds where there are half-space theorems for minimal surfaces are Nil$_3$ and Sol$_3$ \cite{AR, DH, DMR}. For instance, we know that if a properly immersed minimal surface $S$ in Nil$_3$ lies on one side of some entire minimal graph $\Sigma,$ then $S$ is the image of $\Sigma$ by a vertical translation.

Mazet \cite{M} proved a general half-space theorem for constant mean curvature surfaces. Under certain hypothesis, he proved that in a Riemannian $3$-manifold of bounded geometry, a constant mean curvature $H$ surface on one side of a parabolic constant mean curvature $H$ surface $\Sigma$ is an equidistant surface to $\Sigma.$ 

In this paper, we consider the half-space problem for an ideal Scherk graph $\Sigma$ over a polygonal domain $D\subset M,$ where $M$ denotes a Hadamard surface whose curvature is bounded above by a negative constant, that is, $M$ is a complete simply connected Riemannian surface with curvature $K_M\leq -a^2<0$ for some constant $a\in\rr.$ More precisely, we prove the following result.

\begin{theorem}
Let $M$ denote a Hadamard surface with curvature bounded above by a negative constant, and let $\Sigma=\mbox{Graph}(u)$ be an ideal Scherk graph over an admissible polygonal domain $D\subset M.$ If $S$ is a properly immersed minimal surface contained in $D\times\rr$ and disjoint from $\Sigma,$ then $S$ is a translate of $\Sigma.$
\end{theorem}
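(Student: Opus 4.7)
My strategy is the classical sliding (``translate until touching'') argument, combined with a careful analysis at the ideal boundary $\partial D$. Since $S$ is connected, contained in $D\times\rr,$ and disjoint from the graph $\Sigma=\mbox{Graph}(u),$ it lies either entirely in $\{(x,h):h<u(x)\}$ or entirely in $\{(x,h):h>u(x)\};$ assume the former (the other case is symmetric). For $t\geq 0,$ set $\Sigma_t:=\Sigma-(0,0,t)$ and
\[
T:=\inf\bigl\{u(x)-h:(x,h)\in S\bigr\}\in[0,\infty].
\]
Since every fixed point of $D\times\rr$ is eventually above $\Sigma_t$ as $t\to\infty,$ one has $T<\infty;$ and since $S$ lies below $\Sigma,$ $T\geq 0.$ Thus $\Sigma_T$ lies on one side of $S,$ and the goal is to show that $T$ is attained and that $\Sigma_T=S.$

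If the infimum is attained at an interior point $(x_0,h_0)\in S\cap\Sigma_T,$ then $S$ and $\Sigma_T$ are tangent there with $S$ lying on one side, and the interior maximum principle for minimal surfaces, together with analyticity and connectedness of $S,$ forces $S=\Sigma_T,$ a vertical translate of $\Sigma.$ The crux is therefore to rule out the case in which $T$ is only approached by a sequence $(x_n,h_n)\in S$ with $u(x_n)-h_n\to T$ that escapes every compact subset of $D\times\rr.$ Properness of $S$ in $M\times\rr$ forces $x_n$ to converge to a point of $\partial D;$ on an $a$-edge $u\to+\infty$ so $h_n\to+\infty,$ and on a $b$-edge $u\to-\infty$ so $h_n\to-\infty.$

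\textbf{Main obstacle.} The hard part is to obstruct both escape scenarios, and this is where the admissible polygonal structure and the strict curvature bound $K_M\leq-a^2<0$ should enter. Near each $a$-edge the graph $\Sigma$ is asymptotic to the vertical geodesic wall $\gamma\times\rr,$ and the exponential divergence of geodesics in $M$ yields a sharp rate of convergence. The plan is to construct, in a neighborhood of each edge, a family of minimal barriers (obtained, for example, from horizontal isometries of $M$ applied to $\Sigma$ or from Jenkins--Serrin type solutions on a thin strip over the edge) that foliates a neighborhood of the wall, and to use these together with a refined maximum principle at the ideal boundary to confine the projection of $S$ away from the edge at bounded height. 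This should contradict the escape of $h_n$ to $\pm\infty,$ forcing $T$ to be attained at an interior point and reducing the problem to the tangency case above. Establishing the boundary maximum principle, and checking that the vertical distance between $S$ and $\Sigma_T$ extends continuously (with the right asymptotic behavior) to $\partial D,$ is the main technical difficulty I anticipate.
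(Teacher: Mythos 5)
Your setup --- reduce to $S$ below $\Sigma$, slide down, split into ``interior touching'' vs.\ ``escape to the ideal boundary'' --- is exactly the paper's opening move, and the interior case is disposed of by the maximum principle just as you say. However, the entire technical content of the theorem lives in ruling out the escape scenario, and there your proposal has a genuine gap: you correctly identify the obstacle, but what you offer is a wish list rather than an argument, and the wish list points in a direction that does not obviously work.

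Specifically, you propose \emph{local} barriers near each edge of the ideal polygon (``Jenkins--Serrin type solutions on a thin strip,'' or horizontal isometries of $M$ applied to $\Sigma$), together with an unspecified ``refined maximum principle at the ideal boundary.'' Two problems. First, a local barrier over a strip around a single $a$-edge must control a region that is vertically unbounded (since $u\to+\infty$ there), and it is unclear how such a barrier, with non-compact boundary in $D\times\rr$, could be compared to the merely properly immersed $S$ without already assuming the kind of asymptotic control you are trying to establish. Second, there is no off-the-shelf boundary maximum principle at $M(\infty)$ to invoke here; the mechanism that replaces it in the paper is an \emph{integral} one, the flux formula, and this is precisely where the admissibility hypothesis $a(\Gamma)=b(\Gamma)$ (together with the inscribed-polygon inequalities) enters. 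Your sketch never uses admissibility, which is a red flag: the statement is false without it.

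The paper's route, which your proposal does not reach, is global rather than edge-by-edge. One solves a sequence of Plateau-type graph problems $\Sigma_{n,t}$ over compact annular regions $A_n=D_n\setminus\bar D_1$ exhausting $D\setminus\bar D_1,$ with inner boundary pinned on $\Sigma$ and outer boundary on the vertical translate $T(t)\Sigma.$ One must prove a \emph{uniform} existence interval $t\in[0,\delta_0]$ independent of $n$ (via an open-and-closed argument using gradient estimates and the convexity of $\Gamma_n$), then pass to the limit $n\to\infty$ to obtain a minimal graph $\hat\Sigma$ over $D\setminus\bar D_1$ with $u\le\hat u\le u+t.$ The key uniqueness step --- showing $\hat u\equiv u,$ i.e.\ that the barriers converge back to $\Sigma$ minus a compact piece --- is proved by computing $F_{\hat u}(\Gamma)=F_u(\Gamma)=0$ via the flux theorem and the admissibility condition, and then applying the Collin--Rosenberg flux comparison (Proposition \ref{prop-flux}). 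Only with this family in hand can one translate the $\Sigma_{n,t}$ downward past $S$ (using properness of $S$ to find a protected cylinder $C$ around a point of $\Sigma$) and derive a contradiction with $S$ being asymptotic to $\Sigma.$ None of this machinery --- the annular Plateau construction, the uniform $\delta_0,$ the flux computation where admissibility is used --- appears in your proposal, so as written it does not constitute a proof.
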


We remark that Mazet's theorem does not apply in our case for Scherk surfaces. In fact, in the case of minimal surfaces, one of his hypotheses on the geometry of equidistant surfaces to the parabolic one is that the mean curvature points away from the original surface. However, since an end of a Scherk surface is asymptotic to some vertical plane $\gamma\times\rr,$ where $\gamma$ is a geodesic, we know that an equidistant surface is asymptotic to $\gamma_s\times\rr,$ where $\gamma_s$ is an equidistant curve to $\gamma.$ Hence, in the case of a Scherk surface, the mean curvature vector of an equidistant surface points toward the Scherk surface. 

\section{Preliminaries}
In this section, we present some basic properties of Hadamard manifolds and state some previous results. For more details, see \cite{GR}  or \cite{E1; E2; EO}.

%Throughout this paper $M$ will denote a Hadamard surface with bounded curvature, that is, a complete simply connected Riemannian surface with curvature $-b^2\leq K_M\leq -a^2<0.$ 

Let $M$ be a Hadamard manifold, that is, a complete simply connected Riemannian manifold with non positive sectional curvature. We say that two geodesics $\gamma_1, \gamma_2$ of $M$, parameterized by arc length, are \textit{asymptotic}  if there exists a constant $c>0$ such that the distance between them satisfies
$$d(\gamma_1(t),\gamma_2(t))\leq c \ \  \mbox{for all} \ t\geq 0.$$ Note that to be asymptotic is an equivalence relation on the oriented unit speed geodesics of $M.$ We call each of these classes a point at infinity. We denote by $M(\infty)$ the set of points at infinity and by $\gamma(+\infty)$ the equivalence class of the geodesic $\gamma.$ Throughout this section, we only consider oriented unit speed geodesics.

Let us assume that $M$ has sectional curvature bounded from above by a negative constant. Then we have two important facts: 
\begin{enumerate}
\item For any two asymptotic geodesics $\gamma_1, \gamma_2,$ the distance between the two curves $\gamma_1|_{\left[t_0\right.,\left.+\infty\right)},\gamma_2|_{\left[t_0\right.,\left.+\infty\right)}$ is zero for any $t_0\in\rr.$
\item Given $x,y\in M(\infty)$, $x\neq y,$ there exists a unique geodesic $\gamma$ such that $\gamma(+\infty)=x$ and $\gamma(-\infty)=y,$ where $\gamma(-\infty)$ denotes the corresponding point at infinity when the orientation of $\gamma$ is changed.
\end{enumerate} 

%From now on we will assume that $M$ has sectional curvature bounded from above by a negative constant.

For any point $p\in M,$ there is a bijective correspondence between the set of unit vectors in the tangent plane $T_pM$ and $M(\infty),$ where a unit vector $v$ is mapped to the point at infinity $\gamma_v(\infty),$ $\gamma_v$ denoting the geodesic with $\gamma_v(0)=p$ and $\gamma_v'(0)=v.$ Analogously, given a point $p\in M$ and a point at infinity $x\in M(\infty),$ there exists a unique geodesic $\gamma$ such that $\gamma(0)=p$ and $\gamma(+\infty)=x.$ In particular, $M(\infty)$ is bijective to a sphere.

There exists a topology on $M^*=M\cup M(\infty)$ satisfying that the restriction to $M$ agrees with the topology induced by the Riemannian distance. This topology is called the cone topology of $M^*$ (see \cite{GR} for instance).

In order to define horospheres, we consider Busemann functions. Given a unit vector $v,$ the Busemann function $B_v: M\rightarrow \rr$ associated to $v$ is defined as
$$
B_v(p)=\lim_{t\rightarrow+\infty}\left(d(p,\gamma_v(t))-t\right).
$$
This is a $C^2$ convex function on $M,$ and it satisfies the following properties.

\textit{Property 1.} The gradient $\nabla B_v(p)$ is the unique unit vector $w$ in $T_pM$ such that $\gamma_v(\infty)=\gamma_w(-\infty).$

\textit{Property 2.} If $w$ is a unit vector such that $\gamma_v(\infty)=\gamma_w(\infty),$ then $B_v-B_w$ is a constant function on $M.$

\begin{definition}
Given a point at infinity $x\in M(\infty)$ and a unit vector $v$ such that $\gamma_v(\infty)=x,$ the \textit{horospheres at $x$} are defined as the level sets of the Busemann function $B_v.$  
\end{definition}

We have the following important facts with respect to horospheres.
\begin{itemize}
\item By \textit{Property 2}, the horospheres at $x$ do not depend on the choice of the vector $v.$
\item The horospheres at $x$ give a foliation of $M,$ and since $B_v$ is a convex function, each bounds a convex domain in $M$ called a \textit{horoball}.
\item The intersection between a geodesic $\gamma$ and a horosphere at $\gamma(\infty)$ is always orthogonal from \textit{Property 1}.
\item Take a point $p\in M$ and let $H_x$ denote a horosphere at $x$. If $\gamma$ is the geodesic passing through $p$ with $\gamma(+\infty)=x$, then $H_x\cap \gamma$ is the closest point on $H_x$ to $p.$
\item Given $x,y\in M(\infty)$, if $\gamma$ is a geodesic with these points at infinity, and $H_x,H_y$ are disjoint horospheres, then the distance between $H_x$ and $H_y$ coincides with the distance between the points $H_x\cap \gamma$ and $H_y\cap \gamma.$
\end{itemize}

From now on, we restrict M to be a Hadamard surface with curvature bounded above by a negative constant, and by horocycle and horodisk we mean horosphere and horoball, respectively.

Let $\Gamma$ be an ideal polygon of $M,$ that is, $\Gamma$ is a polygon all of whose sides are geodesics and the vertices are at infinity $M(\infty).$ We assume that $\Gamma$ has an even number of sides $\alpha_1, \beta_1,\alpha_2,\beta_2,...,\alpha_k,\beta_k.$ Let $D$ be the interior of the convex hull of the vertices of $\Gamma,$ so $\partial D=\Gamma,$ and $D$ is a topological disk. We call $D$ an ideal polygonal domain. 

\begin{definition}
An ideal Scherk graph over $D$ is a minimal surface that is the graph of a function defined on $D$ and taking the values $+\infty$ on each side $\alpha_i$ and $-\infty$ on each side $\beta_i$.
\end{definition}

For the sake of completeness and in order to understand the hypothesis on our main result (Theorem \ref{main-thm}), let us describe the necessary and sufficient conditions on the domain $D,$ proved by G\'alvez and Rosenberg \cite{GR}, for the existence of an ideal Scherk graph over $D$.

At each vertex $a_i$ of $\Gamma,$ place a horocycle $H_i$ so that $H_i\cap H_j=\emptyset$ if $i\neq j.$

Each $\alpha_i$ meets exactly two horodisks. Denote by $\tilde{\alpha_i}$ the compact arc of $\alpha_i$ outside the two horodisks and denote by $|\alpha_i|$ the length of $\tilde{\alpha_i},$ that is, the distance between these horodisks. Analogously, we can define $\tilde{\beta_i}$ and $|\beta_i|.$

Now define
$$
a(\Gamma)=\sum_{i=1}^{k}|\alpha_i|
$$ 
and
$$
b(\Gamma)=\sum_{i=1}^{k}|\beta_i|.
$$

Observe that $a(\Gamma)-b(\Gamma)$ does not depend on the choice of the horocycles because given two horocycles $H_1,H_2$ at a point $x\in M(\infty)$ and a geodesic $\gamma$ with $x$ as a point at infinity, the distance between $H_1$ and $H_2$ coincides with the distance between the points $\gamma\cap H_1$ and $\gamma\cap H_2.$ 

\begin{definition}
An ideal polygon $\mathcal P$ is said to be inscribed in $D$ if the vertices of $\mathcal P$  are among the vertices of $\Gamma.$ Hence, its edges are either interior in $D$ or equal to some $\alpha_i$ or $\beta_j.$
\end{definition}

The definition of $a(\Gamma)$ and $b(\Gamma)$ extends to inscribed polygons:
$$
a(\mathcal P)=\sum_{\alpha_i\in\mathcal P}|\alpha_i| \mbox{\ \ and \ \ } b(\mathcal P)=\sum_{\beta_i\in\mathcal P}|\beta_i|.
$$

%If $\alpha$ and $\beta$ are geodesic edges of $\mathcal P$ with the same vertex $a_i,$ we denote by $c_i$ the compact arc of the horocycle $H_i$ joining $\alpha\cap H_i$ to $\beta\cap H_i.$

We denote by $|\mathcal P|$ the length of the boundary arcs of $\mathcal P$ exterior to the horodisks bounded by $H_i$ at the vertices of $\mathcal P.$ We call this the truncated length of $\mathcal P.$

\begin{definition}
An ideal polygon $\Gamma$ is said to be admissible if the two following conditions are satisfied.
\begin{enumerate}
\item $a(\Gamma)=b(\Gamma);$
\item For each inscribed polygon $\mathcal P$ in $D,$ $\mathcal P\neq\Gamma,$ and for some choice of the horocycles at the vertices, we have
$$
2a(\mathcal P)<|\mathcal P| \mbox{\ \ and \ \ } 2b(\mathcal P)<|\mathcal P|.
$$
\end{enumerate} 
Moreover, an ideal polygonal domain $D$ is said to be admissible if its boundary $\Gamma=\partial D$ is an admissible polygon.
\end{definition}

The properties of an admissible polygon are the necessary and sufficient conditions for the existence of an ideal Scherk graph over $D\subset M$ \cite{GR}. 

An important tool for studying minimal (and more generally, constant mean curvature) surfaces are the formulas for the flux of appropriately chosen ambient vector fields across the surface. 

Let $u$ be a function defined in $D$ whose graph is a minimal surface, and consider $X=\frac{\nabla u}{W}$ defined on $D,$ where $W^2=1+|\nabla u|^2.$ For an open domain $A\subset D$ and $\alpha$ a boundary arc of $A,$ we define the flux formula across $\alpha$ as
$$
F_{u}(\alpha)=\int_{\alpha}\left\langle X, \nu\right\rangle ds;
$$
here $\alpha$ is oriented as the boundary of $A,$ and $\nu$ is the outer conormal to $A$ along $\alpha.$

\begin{theorem}[Flux Theorem]
Let $A\subset D$ be an open domain.
\begin{enumerate}
\item If $\partial A$ is a compact cycle, then $F_{u}(\partial A)=0.$
\item If $\alpha$ is a compact arc of $A,$ then 
$
F_u(\alpha)\leq |\alpha|.
$
\item If $\alpha$ is a compact arc of $A$ on which $u$ diverges to $+\infty,$ then
$$
F_{u}(\alpha)=|\alpha|.
$$ 
\item If $\alpha$ is a compact arc of $A$ on which $u$ diverges to $-\infty,$ then
$$
F_{u}(\alpha)=-|\alpha|.
$$ 
\end{enumerate} 
\end{theorem}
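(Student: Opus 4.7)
The plan is to exploit the fact that, since $\mathrm{Graph}(u)$ is minimal, the vector field $X=\nabla u/W$ is divergence-free on $D$. Parts (1) and (2) are essentially immediate from this. For (1), the divergence theorem applied to $A$ yields $F_u(\partial A)=\int_A \mathrm{div}(X)\,dA=0$. For (2), the pointwise bound
\[ |X|^2=\frac{|\nabla u|^2}{W^2}=1-\frac{1}{W^2}<1 \]
gives $\langle X,\nu\rangle\leq|X|<1$, and integration along $\alpha$ yields $F_u(\alpha)<|\alpha|$.

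Parts (3) and (4) carry the real content; I treat (3), since (4) follows from (3) applied to $-u$, whose graph is also minimal and whose flux satisfies $F_{-u}=-F_u$. Here $\alpha$ lies on the part of $\partial D$ where $u$ diverges to $+\infty$, so $F_u(\alpha)$ has to be read as a limit. The plan is to foliate a one-sided neighborhood of $\alpha$ inside $D$ by the equidistant curves $\alpha_s$ at intrinsic distance $s$ from $\alpha$, form the thin strip $R_s\subset D$ bounded by $\alpha$, $\alpha_s$ and two short transverse connectors $\ell_s^1,\ell_s^2$ of length $O(s)$, and apply (1) to $R_s$, obtaining
\[ F_u(\alpha)-F_u(\alpha_s)+F_u(\ell_s^1)+F_u(\ell_s^2)=0, \]
where the minus sign reflects that the outward conormal to $R_s$ along $\alpha_s$ is opposite to the outward conormal to $A$ along $\alpha$. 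By (2) the connector fluxes are $O(s)$ and disappear in the limit, so the statement reduces to showing $F_u(\alpha_s)\to|\alpha|$ as $s\to 0^+$.

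The crux, which I expect to be the main obstacle, is to show that $X=\nabla u/W$ converges to the outward $D$-conormal uniformly along $\alpha_s$ as $s\to 0^+$. Since $u\to+\infty$ on $\alpha$ we have $|\nabla u|\to\infty$ near $\alpha$, hence $|X|\to 1$; but one must also show that the direction of $\nabla u$ asymptotes to the normal to $\alpha$ pointing out of $D$. Making this rigorous requires barrier arguments: one places local Scherk-type minimal-graph barriers above neighborhoods of points of $\alpha$, built from the G\'alvez--Rosenberg theory for admissible polygons in a Hadamard surface with $K_M\leq -a^2<0$, and invokes the maximum principle to pin the conormal of $\mathrm{Graph}(u)$ to the vertical direction. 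Once this alignment is established, $\langle X,\nu\rangle\to 1$ uniformly on $\alpha_s$ while $|\alpha_s|\to|\alpha|$, so $F_u(\alpha_s)\to|\alpha|$ and thus $F_u(\alpha)=|\alpha|$ in the limit.
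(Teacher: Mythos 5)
The paper does not actually prove the Flux Theorem; it is stated as a known tool, drawn from the Jenkins--Serrin / Spruck / Collin--Rosenberg / G\'alvez--Rosenberg circle of ideas (see the references \cite{CR} and \cite{GR}). So there is no paper proof to match, and the question is whether your argument stands on its own.

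Parts (1) and (2) are correct and routine: $u$ satisfies $\operatorname{div}(\nabla u/W)=0$, so the divergence theorem gives (1), and $|X|<1$ pointwise gives (2). The reduction of (4) to (3) via $u\mapsto -u$ is also fine, since $W_{-u}=W_u$ and $X_{-u}=-X_u$.

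Parts (3) and (4) are where the theorem actually has content, and there is a genuine gap. Two issues. First, a technical point: you cannot apply part (1) to the strip $R_s$ bounded in part by $\alpha$ itself, since $X$ is not defined (and $W$ blows up) on $\alpha$; the correct move is to apply (1) to annular strips bounded by $\alpha_{s}$ and $\alpha_{s'}$ with $0<s'<s$, conclude $F_u(\alpha_s)$ has a limit as $s\to 0^+$ (the connector contributions are $O(s)$ because $|X|\le 1$), and \emph{define} $F_u(\alpha)$ to be that limit. Second, and more seriously: the whole point of (3) is that $\langle X,\nu\rangle\to 1$ uniformly along $\alpha_s$, i.e.\ that the unit vector $\nabla u/|\nabla u|$ aligns with the outward normal to the geodesic arc $\alpha$. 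You state this, correctly identify it as the crux, and then hand it to unstated ``barrier arguments'' and ``G\'alvez--Rosenberg theory.'' That is an assertion, not a proof. What is actually required is an argument that the minimal graph, translated down, converges on compact sets to the vertical plane $\alpha\times\rr$ (so that the Gauss map becomes horizontal and normal to $\alpha$); this is typically done via curvature estimates for minimal graphs plus a compactness/maximum-principle argument, or via explicit local Scherk barriers pinned on both sides. Without that step spelled out, the proposal does not establish $F_u(\alpha)=|\alpha|$, only that $F_u(\alpha)\le |\alpha|$ (which already follows from $|X|\le 1$). In short: (1), (2), and the symmetry reducing (4) to (3) are fine, but the central lemma in (3) is missing.
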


Another usefull result related to the flux formula is the following.

\begin{proposition}
Let $D$ be a domain whose boundary is an ideal polygon, and let $u,v$ be functions defined on $D$ whose graphs are minimal surfaces. If $u\leq v$ on $D$ and $u=v$ on $\partial D,$ then $F_{u}(\partial D)\leq F_v(\partial D).$ Moreover, equality holds if and only if $u\equiv v$ on $D.$ 
\label{prop-flux}
\end{proposition}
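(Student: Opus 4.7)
The plan is to use the divergence form of the minimal surface equation. For $w\in\{u,v\}$, set $X_w := \nabla w/W_w$ with $W_w := \sqrt{1+|\nabla w|^2}$; the minimality of each graph says $\mathrm{div}\,X_u = \mathrm{div}\,X_v = 0$ on $D$. The two algebraic inputs I would isolate first are the product-rule identity
$$\mathrm{div}\bigl((v-u)(X_v-X_u)\bigr) \;=\; \langle\nabla(v-u),\,X_v-X_u\rangle$$
and the pointwise inequality
$$\langle \nabla v-\nabla u,\,X_v-X_u\rangle \;\geq\; 0,$$
with equality exactly at points where $\nabla u=\nabla v$. The latter is the strict monotonicity of the map $p\mapsto p/\sqrt{1+|p|^2}$, which is the gradient of the strictly convex function $p\mapsto\sqrt{1+|p|^2}$ on the tangent plane.

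Next I would exhaust $D$ by compact subdomains $D_n$ with piecewise smooth boundary. A natural choice, well suited to the ideal geometry, is to fix horocycles at each ideal vertex of $\Gamma$, truncate $D$ by removing horodisks at Busemann level $-n$, and shift each side $\alpha_i,\beta_j$ inward by a distance $1/n$. Applying Stokes' theorem to the vector field $(v-u)(X_v-X_u)$ on $D_n$ yields
$$\int_{\partial D_n}(v-u)\,\langle X_v-X_u,\nu\rangle\, ds \;=\; \int_{D_n}\langle\nabla(v-u),\,X_v-X_u\rangle\, dA \;\geq\; 0.$$
Using the boundary condition $u=v$ on $\partial D$ together with the Flux Theorem (parts 2--4) on the pieces of $\partial D_n$ near each side and each horocyclic cap, I would pass to the limit $n\to\infty$ and identify the weighted boundary integral with the flux difference $F_v(\partial D)-F_u(\partial D)$, giving $F_u(\partial D)\leq F_v(\partial D)$.

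For the equality case, if $F_u(\partial D)=F_v(\partial D)$ then the bulk integral $\int_D\langle\nabla(v-u),X_v-X_u\rangle\, dA$ must vanish; by the strict monotonicity above, $\nabla u\equiv\nabla v$ on $D$, so $v-u$ is constant, and the boundary condition forces this constant to be $0$. The main obstacle I anticipate is the limit argument itself: one must verify that the weighted boundary integral on $\partial D_n$ genuinely converges to the difference of the asymptotic fluxes as $n\to\infty$, controlling both the behavior of $v-u$ near the ideal vertices (where the two graphs may blow up at different rates, yet whose difference remains bounded) and the horocyclic-cap contributions, so that no uncontrolled term from the ideal boundary spoils the comparison.
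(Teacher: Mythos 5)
Your framework is exactly the right one for this statement, and it is essentially the argument in Collin--Rosenberg \cite{CR}, Theorem~2, which is the reference the paper points to for this proposition (the paper gives no proof of its own). The two algebraic ingredients you isolate -- the identity $\mathrm{div}\bigl((v-u)(X_v-X_u)\bigr)=\langle\nabla(v-u),X_v-X_u\rangle$ and the strict monotonicity of $p\mapsto p/\sqrt{1+|p|^2}$ -- are precisely what makes that proof work, so your instinct is sound.

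The gap is in the step you yourself flag as the main obstacle, and it is not merely a technical loose end: the claimed identification of $\int_{\partial D_n}(v-u)\langle X_v-X_u,\nu\rangle\,ds$ with $F_v(\partial D)-F_u(\partial D)$ is false. The flux difference is the \emph{unweighted} quantity $\int\langle X_v-X_u,\nu\rangle$, while your boundary term carries the extra factor $(v-u)$, and this factor changes the picture qualitatively. On any part of $\partial D$ where $u$ and $v$ share a finite boundary value, $(v-u)\to 0$, so the weighted contribution there tends to zero even where the unweighted flux difference would not. Near an ideal edge $\alpha_i$ where both $u,v\to+\infty$, the roles swap: $X_u,X_v\to\nu$ so $\langle X_v-X_u,\nu\rangle\to 0$, while $(v-u)$ may approach any constant, and the integral is taken over an arc whose truncated length grows without bound as $n\to\infty$; no limit can be extracted without quantitative decay estimates. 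In fact, for the \emph{unweighted} vector field $X_v-X_u$, which is divergence-free, Stokes on the compact exhaustion gives $F_u(\partial D_n)=F_v(\partial D_n)=0$ identically, so the inequality $F_u(\partial D)\leq F_v(\partial D)$ cannot be reached by any naive identification of the boundary integral with a flux difference. What is missing is the mechanism that actually converts the nonnegative bulk integral into information about $F_v(\partial D)-F_u(\partial D)$: one must split $\partial D_n$ into the horocyclic caps at the ideal vertices and the arcs along the $\alpha_i,\beta_j$, estimate each piece separately via the Flux Theorem, and track how the vertex contributions vanish while the edge contributions produce the flux. That bookkeeping is the proof; as written, your proposal asserts its conclusion rather than performing it.
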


To a proof of this result, see, for example, the proof of the generalized maximum principle in \cite{CR}, Theorem 2. 

\section{Main Result}
In this section, we consider a Hadamard surface $M$ whose curvature is bounded above by a negative constant, that is, $M$ is a complete simply connected Riemannian surface with curvature $K_M\leq -a^2<0$ for some constant $a\in\rr.$  We now can establish our main result.

\begin{theorem}
Let $M$ denote a Hadamard surface with curvature bounded above by a negative constant, and let $\Sigma=\mbox{Graph}(u)$ be an ideal Scherk graph over an admissible polygonal domain $D\subset M.$ If $S$ is a properly immersed minimal surface contained in $D\times\rr$ and disjoint from $\Sigma,$ then $S$ is a translate of $\Sigma.$
\label{main-thm}
\end{theorem}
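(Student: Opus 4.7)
The strategy is a classical sliding argument: translate $\Sigma$ vertically until it first touches $S$, then invoke the strong maximum principle for minimal surfaces. Since $S$ is disjoint from $\Sigma$ and we may pass to a connected component, $S$ lies on one side of $\Sigma$. After possibly reflecting the pair $(\Sigma,S)$ through the horizontal hyperplane $\{z=0\}$---an operation that replaces $u$ by $-u$ and so swaps the roles of the $\alpha_i$ and $\beta_j$ but preserves admissibility of $D$ (the conditions $a(\Gamma)=b(\Gamma)$ and $2a(\mathcal P)<|\mathcal P|$, $2b(\mathcal P)<|\mathcal P|$ are symmetric in $a$ and $b$)---we may assume $S$ lies strictly above $\Sigma$. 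Vertical translation is an isometry of $M\times\rr$, so each $\Sigma_t:=\Sigma+(0,0,t)$ is again an ideal Scherk graph over $D$.

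Define
$$ t^* \;=\; \sup\{\,t\in\rr : S \text{ lies strictly above } \Sigma_t\,\}. $$
One has $t^*\geq 0$ (since $t=0$ works) and $t^*<\infty$ (a single point of $S$ already obstructs arbitrarily large translates). By continuity and the properness of $S$, $\Sigma_{t^*}$ lies weakly below $S$. If $\Sigma_{t^*}\cap S$ contains a point $(p,z)$ with $p\in D$, then the two minimal surfaces are tangent at an interior point with $S$ on one closed side, and the strong maximum principle forces $S=\Sigma_{t^*}$. This is a vertical translate of $\Sigma$, and we are done.

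The only remaining case is \emph{contact at infinity}: $\Sigma_{t^*}\cap S=\emptyset$ while there exists a sequence $(p_n,z_n)\in S$ with $z_n-u(p_n)-t^*\to 0$ and $p_n\to\partial D$ in the cone topology of $M\cup M(\infty)$. To exclude this, I would first use the foliation of the region $\{z<u(p)+t^*\}$ by the translates $\{\Sigma_s\}_{s<t^*}$, together with the strong maximum principle, to show that in a one-sided collar neighborhood of each boundary arc of $D$, the surface $S$ is the graph of a function $v$ with $v>u+t^*$. Then, on a compact exhaustion $\{D_\varepsilon\}_{\varepsilon>0}$ of $D$ obtained by truncating each ideal vertex of $\Gamma$ by a small horocycle, I would invoke the Flux Theorem and Proposition~\ref{prop-flux} to compare the fluxes of $u+t^*$ and $v$ across the boundary arcs. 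Using $F_{u+t^*}(\tilde\alpha_i)=|\alpha_i|$, $F_{u+t^*}(\tilde\beta_j)=-|\beta_j|$, the balance $a(\Gamma)=b(\Gamma)$, and the strict admissibility inequalities $2a(\mathcal P)<|\mathcal P|$, $2b(\mathcal P)<|\mathcal P|$ for proper inscribed subpolygons $\mathcal P$, one should extract a flux imbalance incompatible with $v>u+t^*$ on $D$.

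The main obstacle is this final step: justifying the graphical structure of $S$ near $\partial D$ so that the flux formula applies, controlling how flux ``leaks'' into the horocyclic caps as they shrink to the ideal vertices, and extracting a sharp contradiction from admissibility. The strict upper bound $K_M\leq-a^2<0$ is essential here, since it ensures that the horocycles at distinct ideal vertices can be taken pairwise disjoint and pinch exponentially fast, providing the uniform geometric control needed for the horocycle limit and also guaranteeing that geodesics meeting at infinity genuinely separate in $D$.
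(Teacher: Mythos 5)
Your proof is correct and matches the paper up through the sliding argument and the interior-touching case: after reducing to $S$ lying strictly on one side of $\Sigma$, you translate until first contact, and an interior point of tangency gives the result by the strong maximum principle. You also correctly identify that the real work is the ``contact at infinity'' case and that the Flux Theorem together with admissibility of $\Gamma$ must be the tool that rules it out. But the step you defer to as ``the main obstacle'' is precisely where a genuine idea is missing, and it is not a detail to be filled in: $S$ is only assumed to be a \emph{properly immersed} minimal surface in $D\times\rr$, so there is no reason for it to be a graph over a collar of $\partial D$. The foliation of the region below $\Sigma_{t^*}$ by the translates $\Sigma_s$, $s<t^*$, only gives you that $S$ stays weakly above $\Sigma_{t^*}$; it gives no control on the topology or multi-sheetedness of $S$ near $\partial D$, and hence no function $v$ to which the flux formula or Proposition~\ref{prop-flux} could be applied. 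Without graphicality of $S$, the entire flux comparison you outline is not set up.

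The paper resolves exactly this by never applying the flux formula to $S$. Instead it builds a discrete family of auxiliary minimal \emph{graphs} $\Sigma_{n,t}$ over annuli $A_n=D_n\setminus\bar D_1$, with inner boundary fixed on $\Sigma$ and outer boundary on the translate $T(t)\Sigma$ (following Rosenberg--Schulze--Spruck). An open-and-closed argument in $t$, using gradient estimates at the convex boundaries and the maximum principle, shows there is a uniform $\delta_0>0$ so that $\Sigma_{n,t}$ exists for all $n$ and all $t\in[0,\delta_0]$. Passing to the limit $n\to\infty$ via a diagonal argument produces a minimal graph $\hat\Sigma=\mathrm{Graph}(\hat u)$ over $\Omega=D\setminus\bar D_1$ with compact boundary $\partial_1\subset\Sigma$ and $u\le\hat u\le u+t$. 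The flux argument is then applied to this \emph{constructed graph} $\hat u$ (not to $S$): writing the flux of $\hat u$ over the truncated polygons $\mathcal P^n$ and letting the horocycles shrink, admissibility forces $F_{\hat u}(\partial\Omega)=F_u(\partial\Omega)=0$, and Proposition~\ref{prop-flux} then gives $\hat u\equiv u$ on $\Omega$. These surfaces, translated down by $t$, become barriers: their boundaries stay strictly above $S$ (by properness, choosing $D_1$ small and $t$ small relative to the gap at an interior point), so by the maximum principle each $\Sigma'_{n,t}$ lies above $S$, and in the limit $S$ lies below a downward translate of $\Sigma$, contradicting asymptotic contact. This barrier construction is the key missing ingredient in your outline; the flux formula is used to \emph{identify the limit of the barriers}, not to analyze $S$ directly.
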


The idea to prove this result is based on the proof of the classical half-space theorem in the Euclidean three-space due to Hoffman and Meeks \cite{HM}. In their proof, they use as barrier a family of minimal surfaces (obtained from the catenoid by homothety) that converges to the plane minus a point, where the plane is the minimal surface for which they want to prove the half-space theorem. Hence, in order to prove our result, using their ideas, we need a family of minimal surfaces that play the role of barriers and converge to our ideal Scherk graph, at least outside a compact set. To construct such a family, we follow an idea of Rosenberg, Schulze, and Spruck \cite{RSS} by constructing a discrete family of minimal graphs in $D\times\rr.$

Let $\Sigma=\mbox{Graph}(u)$ be an ideal Scherk graph over $D$ with $\Gamma=\partial D.$ Given any point $p\in D,$ consider the geodesics starting at $p$ and going to the vertices of $\Gamma.$ Take the points over each one of these geodesics that are at a distance $n$ from $p.$ Now consider the geodesics joining two consecutive points. The angle at which two of these geodesics meet is less than $\pi;$ hence, we can smooth the corners to obtain a convex domain $D_n$ with smooth boundary $\Gamma_n=\partial D_n$ and such that $D_1\subset D_2 \subset \cdots \subset D_n\subset \cdots$ is an exhaustion of $D.$ (See Figure \ref{fig1}.)

\begin{figure}[h]
  \centering
  \includegraphics[height=5cm]{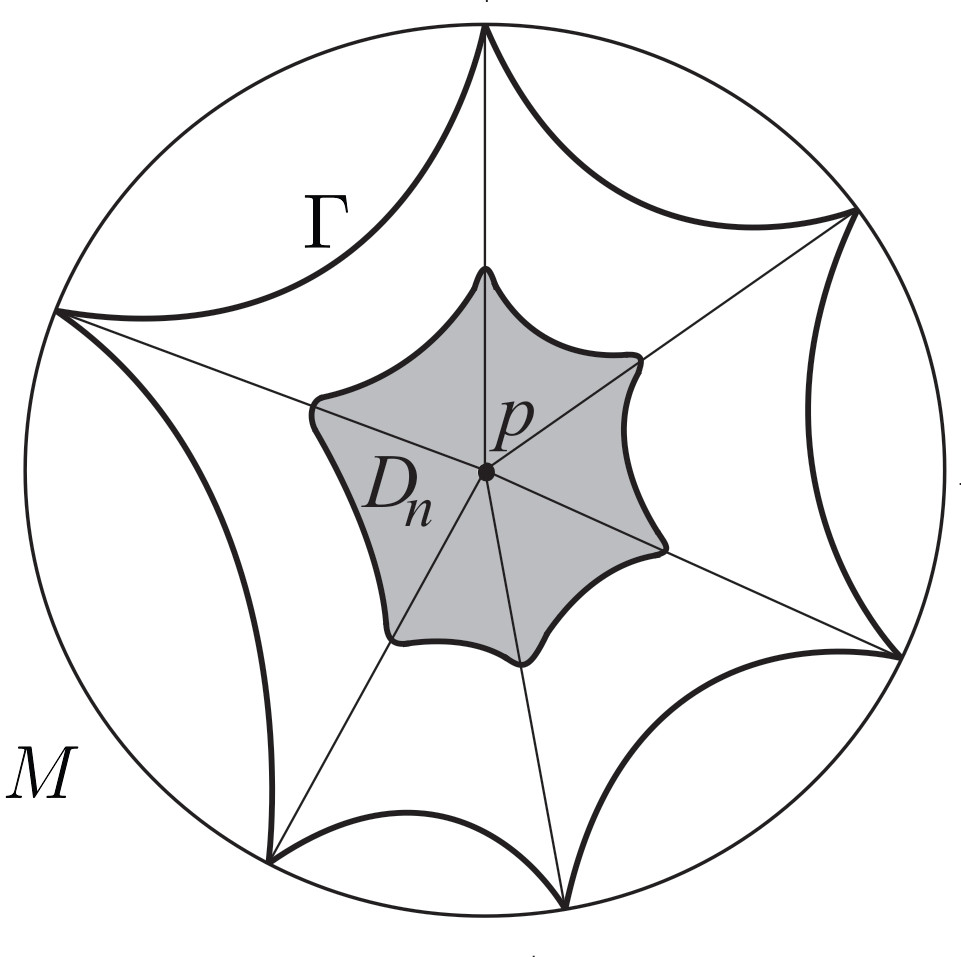}
\caption{Convex smooth domain $D_n$.}
\label{fig1}
\end{figure}

%Let us explain briefly how we smooth the corners. If $\alpha$ and $\beta$ are two geodesics that intersect at a point $q$ making an angle less than $\pi,$ then we can consider two convex curves $c_1$ and $c_2$ sufficiently close to $\alpha$ and $\beta,$ respectively, so that they intersect at $q$ making an angle less than $\pi.$ Hence we can take a convex arc $c_3$ inside a convex ball centered at $q$ such that $c_3$ joins a point of $c_1$ to a point of $c_2$ and the resulting curve is smooth (see Figure \ref{fig2}).

%\begin{figure}[h]
 % \centering
  %\includegraphics[height=2.3cm]{figure21.jpg}
%\caption{Smoothing convex curves at the vertices.}
%\label{fig2}
%\end{figure}

Denote by $A_n$ the annular-type domain $D_n\setminus \bar{D}_1$ and by $\Sigma_n$ the graph of $u$ restrict to $A_n.$ Hence, $\Sigma_n$ is a stable minimal surface, and any sufficiently small perturbation of $\partial \Sigma_n$ gives rise to a smooth family of minimal surfaces $\Sigma_{n,t}$ with $\Sigma_{n,0}=\Sigma_n.$ We use this fact to the deformation of $\partial \Sigma_n$ that is the graph over $\partial A_n$ given by $\partial_1\cup \partial_{n,t}$ for $t\geq0,$ where $\partial_1=\left(\Gamma_1\times\rr\right)\cap \Sigma,$ $\partial_{n,t}=(\Gamma_n\times\rr)\cap T(t)(\Sigma),$ and $T(t)$ is the vertical translation by height $t.$ Then for $t$ sufficiently small, there exists a minimal surface $\Sigma_{n,t}$ that is the graph of a smooth function $u_{n,t}$ defined on $A_n$ with boundary $\partial_1\cup \partial_{n,t}$ (see Figure \ref{fig3}). Note that $u_{n,t}$ satisfies the minimal surface equation on $A_n,$ and, by the maximum principle, $\Sigma_{n,t}$ stays between $\Sigma$ and $\Sigma(t)=T(t)(\Sigma).$ We show that there exists a uniform interval of existence for $u_{n,t},$ that is, we prove that there exists $\delta_0>0$ such that for all $n$ and $0\leq t\leq \delta_0,$ the minimal surfaces $\Sigma_{n,t}=\mbox{Graph}(u_{n,t})$ exist.

\begin{figure}[h]
  \centering
  \includegraphics[height=6cm]{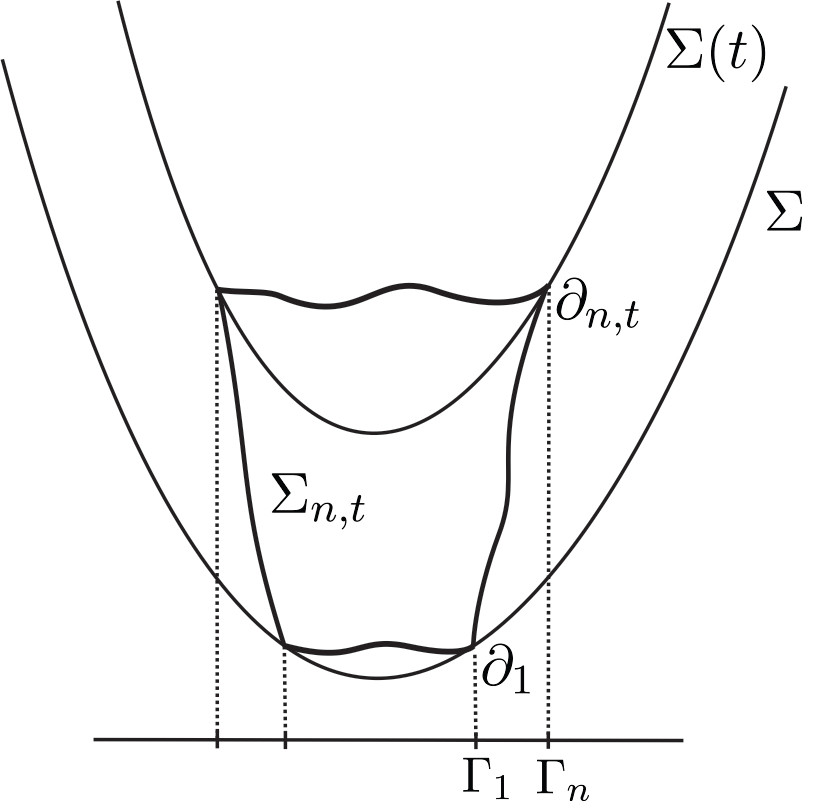}
\caption{Minimal surface $\Sigma_{n,t}.$}
\label{fig3}
\end{figure}

Consider $\delta_0>0$ sufficiently small so that $u_{2,t}$ exists for any $t\in[0, \delta_0].$ We will show this $\delta_0$ works for all $n\geq2,$ that is, we will prove that for $n>2,$ the set $B_n=\{\tau\in[0,\delta_0]; u_{n,t} \ \mbox{exists for} \ 0\leq t\leq \tau \}$ is in fact the interval $[0,\delta_0]$.

\begin{claim}
The set $B_n$ is open and closed in the interval $[0,\delta_0]$. Hence, $B_n=[0,\delta_0].$
\end{claim}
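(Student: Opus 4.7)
The plan is a standard continuity-method argument: openness of $B_n$ via the implicit function theorem, closedness via uniform elliptic estimates and compactness. Both halves rest on the \emph{a priori} sandwich $u\leq u_{n,t}\leq u+t$ on $A_n$, which follows from the maximum principle, since $u$ and $u+t$ are minimal graphs on $A_n$ and the boundary inequalities $u\leq u_{n,t}\leq u+t$ hold on $\partial A_n=\Gamma_1\cup\Gamma_n$ by construction of the boundary data $\partial_1\cup\partial_{n,t}$. This yields a uniform $L^\infty$ bound on the family $\{u_{n,t}\}_{t\in B_n}$.

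For openness, fix $\tau\in B_n$ and consider the minimal surface equation as a map from the affine space of $C^{2,\alpha}$ functions on $A_n$ with Dirichlet data $u|_{\Gamma_1}$ on $\Gamma_1$ and $u|_{\Gamma_n}+t$ on $\Gamma_n$ into $C^{\alpha}(A_n)$. Its linearization at $u_{n,\tau}$ is the Jacobi operator of $\Sigma_{n,\tau}$ with zero Dirichlet data. Since $\Sigma_{n,\tau}$ is a vertical minimal graph over a smooth compact annular domain, the positive function $1/W$ (the vertical component of the upward unit normal) is a positive Jacobi function, so the first Dirichlet eigenvalue of the Jacobi operator is strictly positive and the linearization is an isomorphism. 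The implicit function theorem then produces $u_{n,t}$ for all $t$ in a neighborhood of $\tau$.

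For closedness, take $\tau_k\in B_n$ with $\tau_k\to\tau\in[0,\delta_0]$. The sandwich gives a uniform $L^\infty$ bound on $u_{n,\tau_k}$. On $\partial A_n$ the Dirichlet data depend smoothly on $\tau_k$ with $C^{2,\alpha}$ norms bounded independently of $k$, and the graphs of $u$ and $u+\delta_0$ themselves serve as one-sided minimal barriers at $\Gamma_1$ and $\Gamma_n$ respectively, producing uniform boundary gradient estimates. Combined with the standard interior gradient estimate for minimal graphs of bounded height, this yields a uniform $C^1$ bound, which Schauder theory upgrades to a uniform $C^{2,\alpha}$ bound. Arzel\`a--Ascoli then extracts a subsequential limit, which by continuity of the boundary data solves the minimal surface equation with Dirichlet data corresponding to $t=\tau$, so $\tau\in B_n$.

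The delicate step is the uniform boundary gradient estimate at $\Gamma_n$, where the data slide upward with $t$; this is precisely where the explicit minimal barriers $u$ and $u+\delta_0$ become essential, and it is also the reason one works on the annular region $A_n$ rather than on $D_n$ itself (where one would not have comparable barriers near the ideal vertices). Once the $C^1$ bound is in hand, the remainder is routine quasilinear elliptic regularity. Since $0\in B_n$ and $B_n$ is both open and closed in the connected interval $[0,\delta_0]$, we conclude $B_n=[0,\delta_0]$, proving the claim.
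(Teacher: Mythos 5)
Your overall strategy — a continuity method with openness via the implicit function theorem and closedness via a priori estimates — is the same as the paper's, and your openness argument is essentially correct: a vertical minimal graph over a smooth compact domain has $1/W>0$ on the closure, so the Jacobi operator has strictly positive first Dirichlet eigenvalue and the linearization is invertible. (The paper phrases this via the observation that the boundary is never vertical, which is the same fact.)

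However, the closedness step has a genuine gap, precisely at the point you identify as delicate. The functions $u$ and $u+\delta_0$ do \emph{not} yield two-sided boundary gradient estimates. At $\Gamma_n$, the boundary data is $u+\tau_k$, and the touching upper barrier $u+\tau_k$ only bounds the outward normal derivative $\partial_\nu u_{n,\tau_k}$ from \emph{below} by $\partial_\nu u$; the function $u$ is strictly below $u_{n,\tau_k}$ on $\Gamma_n$ and so does not touch, and $u+\delta_0$ lies strictly above the boundary data (for $\tau_k<\delta_0$) and also does not touch. Nothing in your argument prevents a near-vertical drop of $u_{n,\tau_k}$ just inside $\Gamma_n$, which is exactly the degenerate scenario that could destroy graphicality of the limit. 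The missing ingredient is the \emph{convexity of $\Gamma_n$}: the paper rules out blow-up by noting that if the limit surface were vertical at a boundary point $p\in\Gamma_n\times\rr$, the vertical geodesic plane tangent to $\Gamma_n$ at the projection of $p$ would touch $\Sigma_{n,\tau}$ on one side, and the boundary maximum principle would force $\Sigma_{n,\tau}$ to coincide with that plane — a contradiction. (Equivalently, one could invoke the classical Serrin/Jenkins--Serrin boundary gradient estimate for the minimal surface equation over a convex boundary arc, but some argument using the convexity of $\Gamma_n$ is unavoidable.) Similarly at $\Gamma_1$, which is \emph{concave} as seen from $A_n$, the lower barrier $u$ touches and gives one side, but the upper comparison must come from a fixed minimal graph such as $u_{2,\delta_0}$ (which lies above $u_{n,\tau_k}$ on $A_2$ and agrees with it on $\Gamma_1$); $u+\delta_0$ again does not touch and gives no boundary estimate. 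So the correct barriers are $u$ and $u_{2,\delta_0}$ at $\Gamma_1$, and at $\Gamma_n$ the upper barrier $u+\tau_k$ together with the convexity argument — not the pair $u$ and $u+\delta_0$.
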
 

\begin{proof}
Consider an (increasing) sequence $\tau_k \in B_n$ such that $\tau_k\rightarrow\tau$ when $k\rightarrow\infty.$ First, observe that the family of minimal surfaces $\Sigma_{n,\tau_k}=\mbox{Graph}(u_{n,\tau_k})$ is contained in the region bounded by $\Sigma$ and $\Sigma(\tau),$ in particular, $|u_{n,\tau_k}|\leq C_n$ for all $k$ and that the boundary component $\partial_1$ is contained in the boundary $\partial\Sigma_{n,\tau_k}$ for all $k.$ Then there exists a minimal surface $\Sigma_{n,\tau}$ that is the limit of the surfaces $\Sigma_{n,\tau_k}$ with $\partial_1\subset \partial\Sigma_{n,\tau}.$ It remains to prove that $\Sigma_{n,\tau}$ is a graph. 

Since $D_2\subset D_n,$ using the maximum principle with vertical translations of $u_{2,\delta_0},$ we get that $u_{n,\tau_k}\leq u_{2,\delta_0}$ in a neighborhood of $\Gamma_1.$ Then the gradient of $u_{n,\tau_k}$ is uniformly bounded in a neighborhood of $\Gamma_1.$ We affirm that we also have a uniform bound for points in $\Gamma_n.$ In fact, suppose this is not true, so there exists a sequence $p_k\in \Gamma_n$ with $u_{n,\tau_k}(p_k)\rightarrow p\in\partial_{n,\tau}$ such that $|\nabla u_{n,\tau_k}(p_k)|\rightarrow\infty.$ This implies that the minimal surface $\Sigma_{n,\tau}$ is vertical at $p.$ Considering the horizontal geodesic $\gamma$ that passes through $p$ and is tangent to $\partial_{n,\tau}$ (recall that $\partial_{n,\tau}$ is convex), we can apply the maximum principle with boundary to $\Sigma_{n,\tau}$ and $\gamma\times\left(-\infty\right.,\tau]$ to conclude they coincide, which is impossible. Thus, there exists a constant $C^{'}_n$ such that $|\nabla u_{n,\tau_k}(p)|\leq C^{'}_n$ for all $k$ and $p\in \partial A_n=\Gamma_1\cup \Gamma_n.$

  Since $u_{n,\tau_k}$ are uniformly bounded functions and we have uniform gradient estimates for $u_{n,\tau_k}$ in $\partial A_n,$ we get uniform gradient estimates for $u_{n,\tau_k}$ in the whole domain $A_n$ (for instance, see \cite{DR}, Lemma 2.5). Thus, the DeGiorgi-Nash-Moser and Schauder estimates imply locally uniform estimates for all higher derivatives. Then, using Arzela-Ascoli's theorem, there is some subsequence of $\{u_{n,\tau_k}\}$ that converges to a function $u_{n,\tau}$ defined on $A_n,$ which also satisfies the minimal surface equation; hence, its graph $\hat{\Sigma}=\mbox{Graph}(u_{n,\tau})$ is a minimal surface with boundary $\partial_1\cup\partial_{n,\tau}.$ By uniqueness of the limit of $\Sigma_{n,\tau_k}$ we conclude that $\Sigma_{n,\tau}=\hat{\Sigma}=\mbox{Graph}(u_{n,\tau}).$ Then $\tau\in B_n,$ and $B_n$ is closed.

From the previous discussion we know that, as a consequence of the maximum principle, if $\tau\in B_n,$ then the tangent plane to the boundary of $\Sigma_{n,\tau}=\mbox{Graph}(u_{n,\tau})$ is never vertical. Then $\Sigma_{n,\tau}$ is strictly stable, and, in particular, a sufficiently small perturbation of its boundary $\partial \Sigma_{n,\tau}=\partial_1\cup \partial_{n,\tau}$ to $\partial_1\cup\partial_{n,\tau+t}$ gives rise to a smooth family of minimal surfaces $\Sigma_{n,\tau+t}$ with boundary $\partial_1\cup\partial_{n,\tau+t}.$ Thus, $B_n$ is open.
\end{proof}

Therefore, we have proved that for all $n\geq 2$ and $0\leq t\leq \delta_0,$ there exists a function $u_{n,t}$ defined on $A_n$ such that $\Sigma_{n,t}=\mbox{Graph}(u_{n,t})$ is a minimal surface with boundary $\partial \Sigma_{n,t}=\partial_1\cup \partial_{n,t}.$ 

Fix $t\in\left(0,\right. \left.\delta_0\right].$ For a fixed $n_0,$ consider the sequence $\{u_{n,t}|_{A_{n_0}}\}$ for $n> n_0.$ We already know that $u_{n,t}\leq u_{n_0,t}$ in a neighborhood of $\Gamma_1;$ hence, we have uniform gradient estimates in such a neighborhood. Moreover, since we have uniform curvature estimates for points far from the boundary \cite{S} and $\Gamma_n\not\subset A_{n_0}$ for all $n> n_0,$ we get that the sequence of minimal surfaces $\Sigma_{n,t},$ $n> n_0$, restricted to the domain $A_{n_0}$ has uniform curvature estimates up to the boundary. Then the DeGiorgi-Nash-Moser and Schauder estimates imply locally uniform estimates for all higher derivatives. Thus, there exists a subsequence $\{u_{n_j,t}|_{A_{n_0}}\}$ that converges to a function $\hat{u}_{n_0}$ defined over $A_{n_0}$ whose graph $\hat{\Sigma}_{n_0}$ is a minimal surface with $\partial_1\subset\partial\hat{\Sigma}_{n_0}$ and $u\leq \hat{u}_{n_0}\leq u+t$ over $A_{n_0}.$

Now consider the subsequence $\{u_{n_j,t}\}$ restricted to $A_{2n_0}$ for $n_j>2n_0.$ Using the same argument as before, the sequence $\{u_{n_j,t}|_{A_{2n_0}}\}$ for $n_j>2n_0$ has a subsequence $\{u_{n_{jk},t}|_{A_{2n_0}}\}$ that converges to a function $\hat{u}_{2n_0}$ defined over $A_{2n_0}$ whose graph $\hat{\Sigma}_{2n_0}$ is a minimal surface with $\partial_1\subset\partial\hat{\Sigma}_{2n_0}$ and $u\leq \hat{u}_{2n_0}\leq u+t$ over $A_{2n_0}.$

Since the sequence $\{u_{n_{jk},t}|_{A_{2n_0}}\}$ is a subsequence of $\{u_{n_{j},t}|_{A_{n_0}}\},$ we conclude, by uniqueness of the limit,  that $\hat{u}_{2n_0}=\hat{u}_{n_0}$ in $A_{n_0}.$

We continue this argument to $A_{kn_0}$ for all $k>2,$ obtaining a function $\hat{u}_{kn_0}$ defined on $A_{kn_0}$ whose graph $\hat{\Sigma}_{kn_0}$ is a minimal surface with $\partial_1\subset\partial\hat{\Sigma}_{kn_0},$ $u\leq \hat{u}_{kn_0}\leq u+t$ over $A_{kn_0},$ and $\hat{u}_{kn_0}=\hat{u}_{ln_0}$ in $A_{ln_0}$ for each $1\leq l\leq k.$ Hence, using a diagonal process, we obtain a subsequence of $\{u_{n,t}\}$ that converges to a function $\hat{u}_\infty$ defined over $\Omega=D\setminus \bar{D_1}$ (the limit of the domains $A_n$) whose graph $\hat{\Sigma}_\infty$ is a minimal surface with $\partial \hat{\Sigma}_\infty=\partial_1,$  $u\leq \hat{u}_{\infty} <u+t$ over $\Omega,$ and $\hat{u}_{\infty}=\hat{u}_{kn_0}$ in $A_{kn_0}$ for all $k.$

For simplicity, let us write $\hat{u}$ and $\hat{\Sigma}$ to denote $\hat{u}_\infty$ and $\hat{\Sigma}_\infty.$

Note that, since $u\leq \hat{u}\leq u+t$ over $\Omega,$ the minimal surface $\hat{\Sigma}=\mbox{Graph}(\hat{u})$ assumes the same infinite boundary values at $\Gamma$ as the ideal Scherk graph $\Sigma=\mbox{Graph}(u).$ Consider the restriction of $u$ to $\Omega$ and continue denoting by $\Sigma$ the graph of $u$ restricted to $\Omega.$ We will show that $\Sigma$ and $\hat{\Sigma}$ coincide by analysing the flux of the functions $u, \hat{u}$ across the boundary of $\Omega,$ which is $\Gamma_1\cup \Gamma,$ and using Proposition \ref{prop-flux}.

Let $\alpha_1,\beta_1,\alpha_2,\beta_2, \dots, \alpha_k,\beta_k$ be the geodesic sides of the admissible ideal polygon $\Gamma$ with $u(\alpha_i)=+\infty=\hat{u}(\alpha_i)$ and $u(\beta_i)=-\infty=\hat{u}(\beta_i).$ For each $n,$ consider pairwise disjoint horocycles $H_i(n)$ at each vertex $a_i$ of $\Gamma$ such that the convex horodisk bounded by $H_i(n+1)$ is contained in the convex horodisk bounded by $H_i(n).$ For each side $\alpha_i,$ let us denote by $\alpha_i^n$ the compact arc of $\alpha_i$ that is the part of $\alpha_i$ outside the two horodisks and by $|\alpha_i^n|$ the length of $\alpha_i^n,$ that is, the distance between the two horodisks. Analogously, we define $\beta_i^n$ for each side $\beta_i.$ Denote by $c_i^n$ the compact arc of $H_i(n)$ contained in the domain $D$ and let ${\mathcal P}^n$ be the polygon formed by $\alpha_i^n,\beta_i^n,$ and $c_i^n.$

Since the function $u$ is defined in the interior region bounded by ${\mathcal P}^n$ and ${\mathcal P}^n$ is a compact cycle, by the flux theorem it follows that $F_u({\mathcal P}^n)=0.$ On the other hand, since $u\leq \hat{u},$ we have $F_u({\mathcal P}^n)\leq F_{\hat{u}}({\mathcal P}^n)$ and then $F_{\hat{u}}({\mathcal P}^n)\geq 0.$ Moreover, the flux of $\hat{u}$ across ${\mathcal P}^n$ satisfies 
$$
\begin{array}{rcl}
F_{\hat{u}}({\mathcal P}^n)&=&\sum_{i}F_{\hat{u}}(\alpha_i^n)+\sum_{i}F_{\hat{u}}(\beta_i^n)+\sum_i F_{\hat{u}}(c_i^n)\\
&&\\
&\leq &\sum_i(|\alpha_i^n|-|\beta_i^n|) + \sum_i|c_i^n|.
\end{array}$$

Notice that $|c_i^n|\rightarrow 0$ as $n\rightarrow\infty,$ and, since $\Gamma$ is an admissible polygon, we have $\sum_i|\alpha_i^n|=\sum_i|\beta_i^n|$ for any $n.$ Hence, we conclude 
$$
F_{\hat{u}}({\mathcal P}^n)\rightarrow 0 \mbox{\ \ as \ \ } n\rightarrow\infty.
$$ 

Then $F_u(\Gamma)=\lim_{n\rightarrow\infty}F_u({\mathcal P}^n)=0=\lim_{n\rightarrow\infty}F_{\hat{u}}({\mathcal P}^n)=F_{\hat{u}}(\Gamma).$ 

On the other hand, since ${\mathcal P}^n$ is homotopic to $\Gamma_1,$ it follows that $F_{\hat{u}}(\Gamma_1)=F_{\hat{u}}({\mathcal P}^n)$ for any $n,$ and we conclude that $F_{\hat{u}}(\Gamma_1)=0.$ Analogously (or using the flux theorem as we did for ${\mathcal P}^n$), $F_u(\Gamma_1)=0.$ Therefore, we have proved that the functions $u$ and $\hat{u}$ have the same flux across the boundary $\partial \Omega=\Gamma_1\cup \Gamma.$

Since $\Sigma=\mbox{Graph}(u)$ and $\hat{\Sigma}=\mbox{Graph}(\hat{u})$ are two minimal graphs over $\Omega=D\setminus \bar{D_1}$ such that $u\leq \hat{u}$ on $\Omega,$ $u=\hat{u}$ on $\partial \Omega,$  and $F_u(\partial\Omega)= F_{\hat{u}}(\partial\Omega),$ we conclude, using Proposition \ref{prop-flux}, that $u\equiv \hat{u}$ over $\Omega,$ that is, $\hat{\Sigma}$ is the Scherk graph over $\Omega$ with $\partial \hat{\Sigma}=\partial_1.$

\begin{remark} We have proved that for any $t\in \left(0\right.,\left.\delta_0\right],$ we can get a subsequence of the minimal surfaces $\Sigma_{n,t}$ that converges to a minimal surface $\hat{\Sigma},$ which is the Scherk graph over $D\setminus \bar{D_1}$ with $\partial\hat{\Sigma}=\partial_1$.
\end{remark}

\vspace{0.2cm}

Now we are able to prove the theorem.

\vspace{0.3cm}

\textit{Proof of Theorem \ref{main-thm}.} Since $\Sigma\cap S=\emptyset,$ we can suppose that $S$ is entirely under $\Sigma.$ Pushing down $\Sigma$ by vertical translations, we have two possibilities: either a translate of $\Sigma$ touches $S$ for the first time in an interior point, and then, by the maximum principle, they coincide; or $S$ is asymptotic at infinity to a translate of $\Sigma.$ Let us analyse this last case.

Without loss of generality, assume that $S$ is asymptotic at infinity to $\Sigma.$ We want to prove that, in fact, they coincide. Suppose that this is not true. Then since $S$ is proper, there are a point $p_0\in\Sigma$ and a cylinder $C=B_{\Sigma}(p_0,r_0)\times (-r_0,r_0)$ for some $r_0>0$ such that $S\cap C=\emptyset,$ where $B_{\Sigma}(p_0,r_0)$ is the intrinsic ball centered at $p_0$ with radius $r_0.$ We can assume $r_0$ is less than the injectivity radius of $\Sigma$ at $p_0.$ In our construction of the surfaces $\Sigma_{n,t},$ we can choose the first domain of the exhaustion $D_1$ sufficiently small so that $\partial_1\subset B_{\Sigma}(p_0,\frac{r_0}{2})$ and take $t=\mbox{min}\{\frac{r_0}{2},\delta_0\}.$

Observe that when we translate $\Sigma_{n,t}$ vertically downward by an amount $t,$ the boundaries of the translates of $\Sigma_{n,t}$ stay strictly above $S.$ Thus, by the maximum principle, all the translates remain disjoint from $S.$ We call $\Sigma^{'}_{n,t}$ this final translate with boundary $\partial\Sigma^{'}_{n,t}=\partial^{'}_1\cup \partial^{'}_n,$ where $T(t)(\partial^{'}_1)=\partial_1\subset\Sigma$ and $\partial^{'}_n\subset\Sigma.$ Hence, all the surfaces $\Sigma^{'}_{n,t}$ lie above $S,$ and, as we proved before, there exists a subsequence of $\Sigma^{'}_{n,t}$ that converges to the ideal Scherk graph $\Sigma^{'}$ defined over $D\setminus  \bar{D_1}$ with $T(t)(\Sigma^{'})=\Sigma.$ In particular, we conclude that $S$ lies below $\Sigma^{'},$ which yields a contradiction since we are assuming that $S$ is asymptotic at infinity to $\Sigma.$
\begin{flushright}
$\Box$
\end{flushright}

\begin{flushleft}
\textsc{Instituto Nacional de Matem\' atica Pura e Aplicada (IMPA)}

\textsc{Estrada Dona Castorina 110, 22460-320, Rio de Janeiro-RJ, Brazil}

\textit{Email adress:} anamaria@impa.br
\end{flushleft}

\end{document}